\documentclass[reqno]{amsart}
\usepackage{enumerate}
\usepackage[mathscr]{eucal}
\newtheorem{theorem}{Theorem}[section]
\newtheorem{lemma}[theorem]{Lemma}
\newtheorem{proposition}[theorem]{Proposition}

\theoremstyle{definition}
\newtheorem{definition}[theorem]{Definition}

\newtheorem{remark}[theorem]{Remark}

\newcommand{\norm}[1]{\left\Vert#1\right\Vert}

\newcommand{\tap}[1]{\left\{#1\right\}}
\usepackage{lineno}

\numberwithin{equation}{section}
\usepackage{amsfonts}
\usepackage{color}

\usepackage{amsmath}
\usepackage{amssymb}

\begin{document}
\font\nho=cmr10
\def\dive{\operatorname{div}}
\def\cal{\mathcal}
\def\L{\cal L}
\def\p{\mathbb P}
\def\RR{\mathcal{R}}

 \def\le{\leqslant} 
 \def\leq{\leqslant} 
\def\rar{\rightarrow}
\def\Rar{\Rightarrow}
\def\f{\mathbf F}
\def\r{\hro{R}}
\def\e{\cal{E}}
\def\B{\cal B}
\def\A{\mathcal{A}}
\def\p{\mathbb P}
 
\def\grad{\operatorname{grad}}
\def\dive{\operatorname{div}}

\def\tet{\theta}
\def\Tet{\Theta}
\def\hro{\mathbb}
\def\ho{\mathcal}
\def\P{\ho P}
\def\T{\Delta_H}
\def\E{\mathcal{E}}
\def\n{\mathbb{N}}
\def\M{\mathbb{S}^d}
\def\dMu{\mathbf{U}}
\def\dMcs{\mathbf{C}}
\def\dMcu{\mathbf{C^u}}
\def\vk{\vskip 0.2cm}
\def\td{\Leftrightarrow}
\def\df{\frac}
\def\Wei{\mathrm{We}}
\def\Rey{\mathrm{Re}}
\def\s{\mathbb S}
\def\l{\mathcal{L}}
\def\C+{C_+([t_0,\infty))}
\def\o{\cal O}
\def\D{\mathbf{\Delta}}
\def\grad{\operatorname{grad}}

\title[Navier-Stokes on $d$-Spheres]{Periodicity and Inertial Manifolds for Navier-Stokes Flows on $d$-Spheres}
\author[T.N.H. Vu]{Thi Ngoc Ha Vu }
\address{Thi Ngoc Ha Vu \hfill\break
Faculty of Mathematics and Informatics, 
 Hanoi University of Science and Technology,
 Khoa Toan - Tin, Dai hoc Bach khoa Hanoi,  
 1 Dai Co Viet, Bach Mai, Hanoi, Vietnam}
 \email{ha.vuthingoc@hust.edu.vn}
 \author[T.H. Nguyen]{Thieu Huy Nguyen $^*$}
\thanks{$^*$ Corresponding author: Thieu Huy Nguyen. Email: huy.nguyenthieu@hust.edu.vn}
\address{Thieu Huy Nguyen \hfill\break
Faculty of Mathematics and Informatics, 
 Hanoi University of Science and Technology,
 Khoa Toan - Tin, Dai hoc Bach khoa Hanoi,  
 1 Dai Co Viet, Bach Mai, Hanoi, Vietnam}
 \email{huy.nguyenthieu@hust.edu.vn}

\begin{abstract}   Consider 
a   $d$-sphere $\M$ and denote by $\Gamma(T\M)$ the set of all vector fields on $\M$. We study    
 the Navier-Stokes  equations (NSE): 
$$
\partial_t u + \nabla_u u + \grad\pi  = \D u + \dive f(\cdot, t);\, \dive u=0,$$
for the vector field $u(\cdot, t)\in  \Gamma(T\M)$, where $\D$ denotes the Ebin-Marsden Laplace operator defined by $\D u=  \dive (\nabla u + \nabla u^t)^{\sharp}$, and $\dive f(\cdot, t)$ is the periodic external force.    
 We investigate the Navier-Stokes equations in the framework of $L^p$-spaces over vector fields on $\M$ and prove the existence and  uniqueness of a periodic solution to such equations. Moreover, exploiting the distribution of eingenvalues of Ebin-Marsden Laplace operator we show  the existence of an inertial manifold for solutions around that solution.  
\end{abstract}

\subjclass[2020]{35B42, 35Q30, 35B10,  58J35}

\keywords{Navier-Stokes Equations, $d$-sphere, Ebin-Marsden Laplacian, periodic solutions, inertial manifolds}

\maketitle
\section{Introduction and Preliminaries}

The notion of inertial manifolds associated with solutions to parabolic equations of the form
$u'=Au+f(u)$
has initially been posed by C. Foias, G.R. Sell, and R. Temam \cite{FST} within the framework of efforts aimed at advancing the mathematical understanding of the Navier-Stokes equations via the reduction principle. This approach sought to reduce the  infinite-dimensional dynamical systems to finite-dimensional approximations, thereby facilitating a more rigorous analysis of their qualitative and quantitative behavior. In their seminal work \cite{FST}, the linear operator 
$A$ was typically taken to be the Laplacian defined over an appropriate domain and function space, ensuring that 
$A$ was a self-adjoint operator with compact resolvent and a spectrum consisting of a sequence of eigenvalues tending to infinity. Furthermore, the construction of inertial manifolds crucially relied upon the satisfaction of certain spectral gap conditions, which guarantee the necessary separation of modes for the finite-dimensional reduction to be valid.

Since its inception, the concept of inertial manifolds has become a fundamental tool in the analysis of the asymptotic behavior of solutions to a wide class of evolution equations. In particular, it has enabled the rigorous derivation of finite-dimensional models that accurately capture the long-time dynamics of infinite-dimensional systems. Substantial progress has been achieved in proving the existence, regularity, and structural properties of inertial manifolds for various classes of parabolic equations and related systems, as documented in a broad body of literature including \cite{MCR,   ChueshovScheutzow, CFNT, KS, Temam, Ze14}. These developments have significantly deepened the theoretical understanding of dissipative dynamical systems and have provided an analytical framework for  problems arising in fluid dynamics, reaction-diffusion systems, and beyond.

In recent developments, the theory of inertial manifolds has been successfully extended to address a variety of problems beyond its original scope, including applications to feedback control systems \cite{You-1993} and the analysis of reaction-diffusion equations (\cite{RosaTem}, \cite{RosaJDDE2003}, \cite{KS2011}). Moreover, significant efforts have been directed toward generalizing the concept to accommodate models arising in meteorology \cite{DebTem91} as well as evolution equations governed by sectorial operators (see, e.g., \cite{Sell-jde1992}, \cite{Takagi08}). These extensions underscore the conceptual flexibility of inertial manifold theory and its capacity to provide finite-dimensional reductions for a broad class of infinite-dimensional dynamical systems. In the context of semilinear evolution equations with a nonlinear term that is 
$\varphi$-Lipschitz, where 
$\varphi$ belongs to an appropriate admissible class of functions, Nguyen \cite{Huy2012} has established the existence of inertial manifolds under the assumption that the associated linear operator is self-adjoint in a Hilbert space and possesses a compact resolvent. Furthermore, a refinement of the classical concept has been proposed in \cite{Huy2013}, wherein the existence of \textit{admissibly inertial manifolds of  
$\mathcal E$-class} has been rigorously demonstrated. These results represent substantial progress toward extending the applicability of inertial manifold techniques to broader functional analytic settings and more general classes of evolution equations. Recently, for the case of diffusion equations on $d$-spheres with the linear part being Hodge-Laplacian, we have proved the existence of an inertial manifolds around a periodic solution of such equations (see \cite{NV2025}).

Nevertheless, despite these advances, the question of the existence of inertial manifolds for the Navier-Stokes equations remains unresolved and constitutes a major open problem in the theory of dissipative systems (cf. Kwak \cite{Kwak-NSE}). 
 In the present paper, in an attempt to search for a circumstance for the possibility of the existence of an inertial manifold for Navier-Stokes equations,   we first investigate  the existence of a periodic solution to the Navier-Stokes equation for vector fields on a 
$d$-sphere $\M$ under the action of an periodic external force. Then,
  we study the existence of an inertial manifold associated with such a  solution. Concretely, denoting $T\M$ the tangent bundle on $\M$, and $\Gamma(T\M)$ the set of all vector fields on $\M$,
  we consider the Navier-Stokes equations for  $u:\M\times \r\to T\M$
  \begin{align}\label{dff1}
\partial_t u + \nabla_u u + \grad\pi  = \D u + \dive f(t);\, \dive u=0,
\end{align}
where $\D$ is the Ebin-Marsden Laplace operator (defined below); $f$  is the external force. 
 To formulate Navier-Stokes equation in  $\M$, it is necessary to generalize the standard differential operators,  the divergence of a tensor field, the Laplacian with respect to the metric. This can be done using the Levi-Civita connection. We will briefly recall  the differential operators used in the formulation of the diffusion equations on $\M$ such as $\grad, \dive$,... etc, in the coming paragraphs.

\def\g{\mathbf{g}}
We  recall some preparatory notions and result for latter use. Consider a $d$-dimensional compact Riemannian manifold $(M,\mathbf{g})$. We refer to \cite{He,Jo} 
for details on Riemannian manifolds and  related concepts of geometric analysis. 

Denote by $\nabla$  the Levi-Civita connection and the set of vector fields on $M$ by $\Gamma(TM)$.  
We study the Navier-Stokes equations on $M$ given by
$$\partial_t u + \nabla_u u + \grad\pi  =  \mathbf{\Delta} u + \dive f;\, \dive u=0,\hbox{ for Ebin-Marsden Laplacian $\D$ defined below,} $$
here the unknown velocity $u:M\times \r \to TM$ 
is considered as a vector field on $M$, i.e., 
$u(\cdot, t)\in \Gamma(TM)$, $\dive f$  is the external force.  
  Related notions of differential operators on manifolds are now recalled. Firstly, for $X\in \Gamma(TM)$ we then extend  $\nabla_X$ to any $(p,q)$ 
  tensor as follows: 
  for 
$S\in \Gamma(\otimes^p(TM)\otimes^q
(T^*M))$  we set 
$$(\nabla_X S)(X_1, \cdots, X_q ) = \nabla_X(S(X_1, \cdots, X_q ))-
S(\nabla_XX_1,\cdots, X_q))-\cdots-S(X_1, \cdots,\nabla_X X_q).$$
Moreover, the covariant derivative $\nabla$ on tensor field 
$S\in \Gamma(\otimes^p(TM)\otimes^q(T^*M))$  is defined  
by
$$\nabla S(X, X_1, \cdots, X_q) = (\nabla _X S)(X_1, \cdots, X_q),
\hbox{ hence, }\nabla S\in \Gamma(\otimes^p(TM)\otimes^{q+1}(T^*M).$$
Next, adapting  the ``music notations", for an 1-form $w$,  the vector field $w^\sharp$ is defined by 
$$\g(w^\sharp, Y ) = w(Y), \forall Y \in \Gamma(TM)$$
whereas, for a vector field $X$,  the 1-form $X^\flat$ can be defined as 
$$X^\flat(Y) = \g(X, Y ), \forall Y \in \Gamma(TM).$$
The metric on 1-forms  is defined by setting
$\g(w, \eta) := \g(w^\sharp, \eta^\sharp), \forall w, \eta \in \Gamma(T^*M).$\\
The gradient of a function is then defined as
  $\grad p = (dp)^\sharp.$
  
Generally, for $(p,q)$-tensor field $ T\in \Gamma(\otimes^p(TM)\otimes^q(T^*M))$
  we have
\begin{eqnarray*}
T^\sharp& = &C^2_1(\g^{-1}\otimes T) \in \Gamma(\otimes^{p+1}(TM)\otimes^{q-1}(T^*M)),\\
T^\flat & = &C^1_2(\g \otimes  T ) \in \Gamma(\otimes^{p-1}(TM)\otimes^{q+1}(T^*M)),\\
\dive T &= &C^1_1\nabla T \in \Gamma(\otimes^{p-1}(TM)\otimes^{q}(T^*M))
\end{eqnarray*}
where $C^i_j$ stands for the contraction of the $i$ and $j$ indices for tensors.

   Next,  the Hodge Laplacian on
   1-forms denoted by $\Delta_H$ is defined by 
$$\Delta_H = d^*d + dd^*. $$
Then,  the Ebin-Marsden Laplacian $\D$ is defined by the stress tensor (see \cite{EbiMa,Tay}) as:
$$\D u = \dive(\nabla u + \nabla u^t)^{\sharp}.$$
Since $\dive u=0$ we can express $\D
$ in the following formula
$$\D u = -\T u + 2R(u),$$
where 
  $R(\cdot)$ is the Ricci operator related to the Ricci curvature tensor ($\mathrm{Ric}$) by the formula
$$R(u) = (\mathrm{Ric}(u,\cdot))^{\sharp} \hbox{ for all }u\in\Gamma({TM}).$$
 
We remark that on a $d$-dimensional sphere $\M:=\{x\in \r^{d+1} : |x|=1\}$, the Ricci  curvature tensor has the form
$\mathrm{Ric}=(d-1)\g$.
 Therefore,   $R(u)=(d-1)u$, and hence $\D u = (-{\Delta}_Hu^\flat)^\sharp+2(d-1)u $. Now,  
we consider the Navier-Stokes equations  on $\M$: 
\begin{align}\label{Cauchy}
\begin{cases}
\partial_t u + \nabla_u u + \grad\pi  =  \mathbf{\Delta} u + \dive f;\\
\dive u=0.\end{cases}
\end{align}
We note that, since $\dive u=0$ it follows that $\nabla_uu = \dive (u\otimes u).$ Using Hodge decomposition and the Helmholtz-Hodge projection $\p$ onto the space $L^p_\sigma(\Gamma(T\M)):=
\overline{\Lambda^1_{c, \sigma}(\M)}^{\|\cdot\|_{L^p}}$ 
where $\Lambda^1_{c, \sigma}(\M)$ denotes the space of all smooth, divergence free, compactly supported vector fields on $\M$ (note that  $\ker \p=\tap{\nabla f\in L^p(\Gamma(T\M)) : f\in L^p_{loc}(\Gamma(T\M))}$) we can get rid of the pressure term $\pi$ to obtain
\begin{align}\label{NSESd1}
\begin{cases}
\partial_t u + \p\dive (u\otimes u)  =  \mathbf{\Delta} u + \p\dive f;\\
u\in L^p_\sigma(\Gamma(T\M)).\end{cases}
\end{align}
Note that, on $L^p_\sigma(\Gamma(T\M))$ we have $\p\D=\D$. Moreover, 
 on $\M$, the spectrum of ${\Delta}_H$ consists of eigenvalues $ \lambda_k= k(k+d-1)+d-1$ for $k=1, 2, \ldots$   Therefore, the spectrum of $-\D={\Delta}_H-2(d-1)I$ appears as  
 $$\tap{\lambda_k=k^2+(k-1)(d-1) : k=1, 2, \ldots}.$$

This distribution of eigenvalues of $-\D$ leads to the validity of spectral gap condition for certain choices of $k, d$, yielding 
the existence of an inertial manifold.
Denote by $(e^{t{\D}})_{t\geqslant 0}$ the semigroup generated by ${\D}$, i.e., the semigroup associated with the homogeneous Cauchy problem
\begin{align}\label{CauchyStokes0}
\begin{cases}
\partial_t u &= {\D}u,\\
u|_{t=0} &= u_0 \in \Gamma(T\M), 
\end{cases}
\end{align}
\textit{i.e.}, the unique solution of the above Cauchy problem is given by $u(t)=e^{t\D}u_0$.

 Assume that $\{e_{k}\}_{k=1}^{\infty}$ is the orthonormal basis of $X:=L^2(\Gamma(T\M))$ consisting of the corresponding eigenfunctions of the operator $-\D$ (i.e., $-\D e_k=\lambda_k e_k$).
Let now $\lambda_{N}$ and $\lambda_{N+1}$ be two successive and different eigenvalues with $\lambda_{N} < \lambda_{N+1}$, let further $P$ be the orthogonal projection onto the first $N$ eigenvectors of the operator $-\D$. 
Since $PX$ is of finite dimension, we have that the restriction $(e^{t\D}P)_{t\geqslant 0}$ of the semigroup $(e^{t\D})_{t\geqslant 0}$ to $PX$ can be extended to the whole line $\mathbb{R}$.

Furthermore, 
the following exponential decay estimate for $(e^{t\D})_{t\geqslant 0}$ has been essentially proved in \cite{Pi}.

\begin{proposition}\label{lplq}
Let $ 1\le p< \infty$ and denote by $\|\cdot\|_{p}$ the norm in the space $L^{p}(\Gamma(T\M))$. 
\begin{enumerate}[(i)]
\item For $t>0$  
we have 
\begin{equation}
\label{dispersive}
\left\| e^{t\Delta} x\right\|_{L^p} \leq Me^{-\alpha t}\left\| x \right\|_{L^p}\hbox{ for all }x \in L^p(\Gamma(T\M)),
\end{equation}
 where $M$ and $\alpha$ are certain positive constants.  
\item  For $p$ and $q$ such that $1<p\leq q <\infty$, and for all $t>0$ we have
\begin{equation} \label{lplq1}
\left\| e^{t\D}\dive f \right\|_{L^q} \leq [h_d(t)]^{\frac{1}{p}-\frac{1}{q}+\frac{1}{d}}e^{-t\beta} \left\| f\right\|_{L^p}
\hbox{ for all  $f \in L^p(\Gamma(T\M\otimes T\M))$}
\end{equation} 
 where $h_d(t): = C\max\left( \frac{1}{t^{d/2}},1 \right)$, 
   and $\beta$ is certain positive constant depending only on $p, q$, and  $d$.
\end{enumerate}
\end{proposition}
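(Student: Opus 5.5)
The plan is to treat $\D$ on $L^p(\Gamma(T\M))$ as a self-adjoint (on $L^2$), negative operator with compact resolvent, use heat-kernel bounds for the Hodge Laplacian on a compact manifold, and then absorb the zero-order Ricci term $2(d-1)I$ by a spectral gap argument. This is the route of Pierfelice \cite{Pi}.

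\textbf{Part (i).} On $L^2_\sigma$ the spectrum of $-\D$ is $\{\lambda_k\}$ with $\lambda_1=1>0$. Spectral calculus therefore gives $\|e^{t\D}x\|_{L^2}\le e^{-\lambda_1 t}\|x\|_{L^2}$.

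To pass to $L^p$, I use the kernel of $e^{-t\Delta_H}$ on 1-forms over the compact manifold $\M$. It satisfies Gaussian upper bounds $|k_t(x,y)|\le Ct^{-d/2}e^{-c\,\rho(x,y)^2/t}$ for $0<t\le 1$, which come from the Weitzenböck formula and domination by the scalar heat kernel. The extra factor $e^{2(d-1)t}$ is harmless for $t\le1$. These bounds give uniform $L^p$ bounds, and $L^p\to L^\infty$ bounds of order $t^{-d/(2p)}$, for $t\in(0,1]$.

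For $t\ge 1$ I write $e^{t\D}=e^{\D/2}e^{(t-1)\D}e^{\D/2}$. The outer factors map $L^p\to L^2$ and $L^2\to L^p$ boundedly, because $\M$ is compact and the short-time smoothing estimates hold. The middle factor decays like $e^{-\lambda_1(t-1)}$ on $L^2$. Combining the two time regimes yields (i) with $\alpha=\lambda_1$, or any $\alpha<\lambda_1$, and a suitable $M$. There is a caveat: the $L^2$ decay is only valid on the divergence-free part. So I would interpret the statement on $L^p_\sigma$, as is implicit in the paper. Equivalently, I project off any nonpositive modes of $-\D$ on the full space, which form a finite-dimensional kernel spanned by Killing fields.

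\textbf{Part (ii).} This rests on the gradient/divergence heat-kernel estimate $\|e^{t\D}\dive f\|_{L^q}\lesssim t^{-\frac12-\frac d2(\frac1p-\frac1q)}\|f\|_{L^p}$ for $0<t\le 1$. I would obtain it by duality: $e^{t\D}\dive$ is, up to sign, the adjoint of $\nabla e^{t\D}$. Bounds for $\nabla e^{t\D}$ follow from Gaussian bounds on the covariant derivative of the kernel, $|\nabla_x k_t|\le Ct^{-(d+1)/2}e^{-c\rho^2/t}$. These hold on compact manifolds with bounded geometry for small $t$, again using the Weitzenböck identity. Young/Schur estimates then give the $L^p\to L^q$ rate.

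Finally, note that $t^{-\frac d2(\frac1p-\frac1q+\frac1d)}=t^{-\frac d2(\frac1p-\frac1q)-\frac12}$, which is exactly $[h_d(t)]^{\frac1p-\frac1q+\frac1d}$ for $t\le 1$. For $t\ge1$, $h_d=C$, and I write $e^{t\D}\dive f=e^{(t-1/2)\D}\,e^{\D/2}\dive f$. Applying the small-time bound at $t=1/2$ (from $L^p$ into $L^q$), followed by (i) in $L^q$, gives $Ce^{-\alpha t}\|f\|_{L^p}$. Here $\dive f$ is implicitly composed with $\p$, so that we remain in the $\sigma$-space. Taking $\beta$ slightly smaller than $\alpha$ absorbs the constants, and the small-$t$ range is harmless since $e^{-\beta t}\ge e^{-\beta}$ there.

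The main obstacle I expect is the gradient kernel bound for the vector-valued operator $\D$ on vector fields. This is the step genuinely taken from \cite{Pi}, where it is obtained via Bochner–Weitzenböck and Li–Yau type estimates. Everything else is a routine semigroup-splitting argument.
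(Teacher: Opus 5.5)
The paper gives no argument here beyond a pointer to \cite{Pi}. Your short-time ingredients are standard and fine on a compact manifold: the Gaussian and gradient heat-kernel bounds, duality for $e^{t\mathbf{\Delta}}\operatorname{div}$, the splitting $e^{t\mathbf{\Delta}}=e^{\mathbf{\Delta}/2}e^{(t-1)\mathbf{\Delta}}e^{\mathbf{\Delta}/2}$, and the exponent bookkeeping. The gap is the one input that produces all the exponential decay: ``on $L^2_\sigma$ the spectrum of $-\mathbf{\Delta}$ is $\{\lambda_k\}$ with $\lambda_1=1>0$.'' This is false on $\mathbb{S}^d$, and with it the proposition fails as stated.

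Take a Killing field $K$, for example $K(x)=Ax$ with $A$ a skew-symmetric $(d+1)\times(d+1)$ matrix. It is smooth and satisfies $\nabla K+(\nabla K)^t=0$. Taking the trace gives $\operatorname{div}K=0$, and by the very definition $\mathbf{\Delta}K=\operatorname{div}(\nabla K+\nabla K^t)^\sharp=0$. So $K\in L^p_\sigma$ for every $p$ and $e^{t\mathbf{\Delta}}K=K$ for all $t\geq 0$. No bound $Me^{-\alpha t}$ with $\alpha>0$ can hold, on $L^p_\sigma$ or on $L^p$. (If the $\Delta$ in (i) is read literally as $-\Delta_H$, (i) is true. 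But the paper applies it to $e^{t\mathbf{\Delta}}$ in Lemma~\ref{mainlin}, and so do you.)

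Your caveat has this backwards. You correctly identify the Killing fields as the kernel on the full space, but they are divergence-free, so passing to $L^p_\sigma$ does not remove them. The two remedies you call equivalent are not equivalent.

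The error comes from the eigenvalue list in the paper. On co-closed $1$-forms of $\mathbb{S}^d$ the Hodge Laplacian has eigenvalues $(k+1)(k+d-2)$, $k\geq 1$, not $k(k+d-1)+d-1$. For $k=1$ this gives $2(d-1)$, consistent with $\Delta_H K=2\,\mathrm{Ric}(K)$. Hence $-\mathbf{\Delta}=\Delta_H-2(d-1)$ on $L^2_\sigma$ has eigenvalues $(k-1)(k+d)$, the first being $0$ with multiplicity $d(d+1)/2$. The citation does not repair this. \cite{Pi} works on non-compact manifolds with negative Ricci curvature, where $2\,\mathrm{Ric}$ adds damping and Killing fields are not in $L^p$.

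The large-time half of your (ii) fails for the same reason, even after composing with $\mathbb{P}$. Since $\langle \operatorname{div}f,K\rangle=-\langle f,\nabla K\rangle$ and $\nabla K$ is skew, any $f$ whose skew part is not orthogonal to $\nabla K$ excites a Killing mode that $e^{t\mathbf{\Delta}}$ leaves untouched. Concretely, $f=\nabla K$ (with an index raised) gives $\operatorname{div}f=\pm(d-1)K$, so $e^{t\mathbf{\Delta}}\operatorname{div}f$ is constant in $t$.

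What your argument does prove, essentially verbatim, is a corrected statement. Let $\Pi$ be the $L^2$-orthogonal projection onto the finite-dimensional space of Killing fields. It is bounded on every $L^p$, being of finite rank with a smooth kernel, and it commutes with $e^{t\mathbf{\Delta}}$. Then:
\begin{enumerate}[(i)]
\item (i) holds on $(I-\Pi)L^p_\sigma$ with $\alpha=d+2$;
\item (ii) holds for all $f$ with $\Pi\operatorname{div}f=0$, in particular for symmetric $f$.
\end{enumerate}
The symmetric case covers the Navier--Stokes nonlinearity $u\otimes u$, but not a general forcing $f$.
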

\begin{proof}
See \cite{Pi}.  
\end{proof}

\section{Main Results}
We formulate our main results on  periodic solutions and the associated inertial manifold in their vicinity. For 
 a Banach space $X$ with norm $\norm{\cdot}_X$ we denote by 
$${C}_b(\r, X):=\tap{v:\r\to X  \mid  v \hbox{ is continuous and }\sup_{t\in \r}\|v(t)\|_{X}<\infty}$$
endowed with sup-norm $\|v\|_{C_b}:=\sup_{t\in \r}\|v(t)\|_{X}$. 

We investigate the semilinear  equation
\begin{align}\label{NSESd}
\begin{cases}
\partial_t u(t) + \p\dive (u\otimes u)  =  \mathbf{\Delta} u + \p\dive f,\ \ t>s;\\
u(s)=x\in \Gamma(T\M)\end{cases}
\end{align}
where 
the external force $f\in {C}_b(\r, X)$ is $T$-periodic with respect to time $t$.  
    Furthermore, by the {\it mild solution} to 
  (\ref{NSESd})  we mean the function $u$ satisfying the following integral equation (Duhamel's formula): 
\begin{equation}\label{mild}
u(t)= e^{(t-s)\D}x+\int_s^te^{(t-\tau)\D}\p\dive[-u(\tau)\otimes u(\tau)+f(\tau)]d\tau\hbox{\ for all }t\ge s.
\end{equation}
We are stating the first main result on existence of a unique $T$-periodic mild solution to \eqref{NSESd} under the action of a $T$-periodic external force.

\begin{theorem}\label{mainnon}
  Consider Navier-Stokes equations \eqref{NSESd}  and  $f\in C_b(\r, L^{p/2}(\Gamma(T\M\otimes T\M)))$. Suppose that $f$ is  $T$-periodic with respect to time $t$. 
 Then, if $\|f\|_{C_b(\r, L^{p/2}(\Gamma(T\M\otimes T\M)))}$ is sufficiently  small, Equation (\ref{NSESd}) has one and only 
one   $T$-periodic  mild solution $\hat {u}$ on a certain ball of $C_b (\mathbb{R}, L_\sigma^p(\Gamma(T\M)))$.
\end{theorem}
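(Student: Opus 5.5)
We treat the problem in two stages: first the linear inhomogeneous equation with periodic forcing, then the nonlinear one by a fixed point argument in $C_b(\r, L^p_\sigma(\Gamma(T\M)))$. We take $d<p<\infty$ (so that $p/2$ and $p$ fit into Proposition \ref{lplq}(ii) with integrable singularity). Given $v\in C_b(\r, L^p_\sigma(\Gamma(T\M)))$, we consider the linear problem
$$\partial_t w = \D w + \p\dive[-v\otimes v + f].$$
Because of the exponential decay in Proposition \ref{lplq}, the only bounded-on-$\r$ mild solution of this problem is
$$w(t)=\int_{-\infty}^t e^{(t-\tau)\D}\p\dive[-v(\tau)\otimes v(\tau)+f(\tau)]\,d\tau =: \Phi(v)(t).$$
Indeed, letting $s\to-\infty$ in \eqref{mild} kills the term $e^{(t-s)\D}x$ when $x$ stays bounded.

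\textbf{Step 1: $\Phi(v)$ is well defined and bounded.} By Hölder's inequality, $\|v\otimes v\|_{L^{p/2}}\le \|v\|_{L^p}^2$. We apply \eqref{lplq1} with the pair $(p/2,p)$; using boundedness of $\p$ on $L^p$ and the fact that it commutes with the semigroup, we get
$$\|\Phi(v)(t)\|_{L^p}\le \int_{-\infty}^t [h_d(t-\tau)]^{\frac1p+\frac1d}e^{-\beta(t-\tau)}\,d\tau\,\big(\|v\|_{C_b}^2+\|f\|_{C_b}\big).$$
Near $0$ the integrand behaves like $(t-\tau)^{-\frac d2(\frac1p+\frac1d)}=(t-\tau)^{-\frac d{2p}-\frac12}$, which is integrable precisely when $p>d$. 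For large $t-\tau$ it is controlled by $e^{-\beta(t-\tau)}$. Hence the integral equals a finite constant $K$ independent of $t$. Continuity in $t$ follows by the standard splitting of the integral, and divergence-freeness holds because $\p$ is applied.

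\textbf{Step 2: periodicity is preserved.} If $v$ and $f$ are $T$-periodic, the substitution $\tau\mapsto\tau+T$ shows $\Phi(v)(t+T)=\Phi(v)(t)$. So $\Phi$ maps the closed subspace $\mathcal{P}_T$ of $T$-periodic functions in $C_b(\r, L^p_\sigma(\Gamma(T\M)))$ into itself.

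\textbf{Step 3: contraction on a small ball.} By the same estimate,
$$\|\Phi(v_1)-\Phi(v_2)\|_{C_b}\le K\big(\|v_1\|_{C_b}+\|v_2\|_{C_b}\big)\|v_1-v_2\|_{C_b},$$
since $v_1\otimes v_1-v_2\otimes v_2=(v_1-v_2)\otimes v_1+v_2\otimes(v_1-v_2)$. Let $B_\rho$ be the ball of radius $\rho$ in $\mathcal P_T$, and choose $\rho$ and $\|f\|_{C_b}$ with $K(\rho^2+\|f\|_{C_b})\le\rho$ and $2K\rho<1$. For example, take $\rho=1/(4K)$ and $\|f\|_{C_b}\le 3/(16K^2)$. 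Then $\Phi\colon B_\rho\to B_\rho$ is a contraction, and Banach's fixed point theorem yields a unique $\hat u\in B_\rho$ with $\hat u=\Phi(\hat u)$.

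\textbf{Step 4: $\hat u$ is a mild solution in the sense of \eqref{mild}.} Splitting $\int_{-\infty}^t=\int_{-\infty}^s+\int_s^t$ and using the semigroup property, we get $\hat u(t)=e^{(t-s)\D}\hat u(s)+\int_s^t\cdots$ for all $t\ge s$. Conversely, any $T$-periodic mild solution in the ball is bounded, so letting $s\to-\infty$ in \eqref{mild} shows it satisfies $u=\Phi(u)$, which gives uniqueness.

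The main obstacle is Step 1, namely the uniform-in-$t$ integrability of the kernel bound. It forces the restriction $p>d$. One must also check that Proposition \ref{lplq}(ii) applies to $\p\dive$ rather than $\dive$, which we handle through the $L^p$-boundedness of $\p$ and its commutation with $e^{t\D}$.
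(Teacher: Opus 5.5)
Relative to the tools the paper supplies, your proof is correct. Its nonlinear part (Steps 3--4) is the paper's own contraction on the ball of $T$-periodic functions in $C_b(\r, L^p_\sigma(\Gamma(T\M)))$, with the same use of Proposition~\ref{lplq}(ii) for the pair $(p/2,p)$ and the same restriction $p>d$.

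The genuine difference is the linear step. The paper (Lemma~\ref{mainlin}) uses Serrin's method. It solves from zero data at time $s$, uses \eqref{dispersive} to show that $u(s+nT)$ is Cauchy, restarts from the limit $x^*$, and then checks periodicity and uniqueness of that Cauchy-problem solution by hand. You instead write the bounded solution directly as $\int_{-\infty}^t e^{(t-\tau)\D}\p\dive g(\tau)\,d\tau$. Periodicity is then just translation invariance, and the bound is the single constant $\int_0^\infty [h_d(\tau)]^{\frac1p+\frac1d}e^{-\beta\tau}\,d\tau$. Uniqueness also holds among all bounded solutions, not only periodic ones.

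One point you should make explicit: a $T$-periodic mild solution started at time $s$ also satisfies Duhamel's formula from each $s-nT$ (shift $\tau\mapsto\tau+nT$ and use periodicity). This is what licenses $s\to-\infty$ in Step 4.

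Your route is shorter and extends at once to, e.g., almost periodic forcing. Serrin's route only solves forward Cauchy problems and needs no improper integral over $(-\infty,t]$, so it adapts to settings where only forward bounds and stability are available. In this paper, though, both rest on exactly the same two estimates.

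Be aware of a defect that affects the paper's proof and yours equally: the exponential decay of $e^{t\D}$ taken from Proposition~\ref{lplq} fails on $\M$.

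\begin{itemize}
\item Every Killing field $Z$ (e.g.\ $Z(x)=Ax$ with $A$ skew-symmetric) is divergence free and satisfies $\nabla Z+\nabla Z^t=0$. Hence $\D Z=0$ and $e^{t\D}Z=Z$.
\item Because $\nabla Z$ is skew, $\langle\dive(u\otimes u),Z\rangle=0$. By self-adjointness of $e^{t\D}$ and $\p$ in $L^2$, the $Z$-component of a mild solution changes only through $\langle\dive f,Z\rangle$.
\item For $f=0$, every $\varepsilon Z$ is a stationary mild solution, since $\nabla_Z Z=-\frac12\grad|Z|^2$ is annihilated by $\p$. So uniqueness fails in every ball centred at $0$.
\item For the constant, arbitrarily small force $f=\varepsilon(\nabla Z)^\sharp$, one gets $|\langle\dive f,Z\rangle|=\varepsilon\|\nabla Z\|_{L^2}^2\neq0$. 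The $Z$-component then grows linearly in $t$, so no bounded solution exists.
\end{itemize}

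A correct version must assume that $\p\dive f$ has no Killing component (e.g.\ $f$ symmetric) and work in the $L^2$-orthogonal complement of the Killing fields. The nonlinearity preserves this complement, and $-\D$ has a positive spectral gap on it. There your $(-\infty,t]$ construction goes through verbatim, with uniqueness holding only once the Killing component is fixed.
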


Next, we will state our main result on the existence of the inertial manifold 
 for the mild solutions to 
 Equation 
 \eqref{NSESd} around a periodic mild solution.  
To that purpose, we first introduce the concept of inertial manifolds near a certain mild periodic solution in the following definition.
\begin{definition}\label{Inertial} Consider the projection $P$ as above, and 
  let $\hat{u}$ be a periodic mild solution to \eqref{NSESd}.
An \emph{inertial manifold}  for mild solutions to \eqref{NSESd} around  $\hat{u}$ is a Lipschitz surface $\mathbb{M}$ in $X:=L_\sigma^2(\Gamma(T\M))$ which  is the graph of a Lipschitz function $\Phi: \mathrm{Im} P \rightarrow (I-P)X$, i.e.,
$$\mathbb{M} = \left\{ x+\Phi x \mid x\in \mathrm{Im}P \right\},$$
and the following conditions are satisfied:
\begin{itemize}
\item[(i)] $\mathbb{M}$ is positively $\hat{u}$-invariant under Eq. \eqref{NSESd}, i.e., if a mild  solution $x(t), t\geqslant s$, of Eq. \eqref{NSESd} satisfies $x(s)-\hat{u}(s) \in \mathbb{M}$, 
then we have that $x(t)-\hat{u}(t) \in  \mathbb{M}$ for $t\geqslant s$.
\item[(ii)]  $\mathbb{M}$  exponentially $\hat{u}$-attracts all the mild solutions to Eq. \eqref{NSESd} in the sense that for each mild solution $u(\cdot)$ to  Eq. \eqref{NSESd} and any fixed $s\in \mathbb{R}$, 
 there is a mild solution $u^* (\cdot)$ of \eqref{NSESd} such that $u^*(t)\in \mathbb{M}$ for $t\geqslant s$ and
\begin{equation}\label{attract}
\norm{u(t)-\hat{u}(t)-u^*(t)} \leqslant Ce^{-\gamma(t-s)} \hbox{  for all  } t\geqslant s \hbox{  and some constants  } \gamma, C > 0.
\end{equation}
\end{itemize}
\end{definition}
For simplicity of presentation  we put
 $
A:=-\D 
$ on the space $
H:=L^2_\sigma(\Gamma(TS^d)).
$
Then \(A\) is a positive self-adjoint operator with compact inverse. Let 
$$
0<\lambda_1\leq\lambda_2\leq\cdots
$$ 
be the eigenvalues of \(A\), counted with multiplicity.

Let 
$
\widehat u\in C_b(\mathbb R,H^1(\Gamma(TS^d)))
$ be a periodic solution and take any other solution $u$. Then,  
for \(w=u-\widehat u\), the Navier-Stokes equation can be written as
$$
\partial_t w+Aw=F(w),
$$
where
$$
F(w)
:=
-P\operatorname{div}
\left[
w\otimes w+w\otimes\widehat u
+\widehat u\otimes w
\right].
$$
We then perform the cut-off procedure as follows. 
Let \(R>0\), and let
$
\chi_R\in C^\infty([0,\infty))
$
satisfy
\begin{equation}
\label{cut}
\begin{aligned}
0\leq\chi_R\leq1,& \qquad
\chi_R(r)=1\quad (0\leq r\leq R),\cr
\chi_R(r)=0\quad (r\geq2R),& 
\qquad
|\chi_R'(r)|\leq \frac{C_\chi}{R}.
\end{aligned}
\end{equation}
Define the cut-off nonlinearity
\begin{equation}\label{CutEq}
F_R(w):=
\chi_R(\|w\|_{H^1})F(w).
\end{equation}
We determine  the cut-off perturbation equation
\begin{equation}\label{2.16}
    \partial_t w+Aw=F_R(w).
\end{equation}
Since
\begin{equation}
\label{3.151}
F_R(w)=F(w)
\qquad\text{whenever }\|w\|_{H^1}\leq R,
\end{equation}
the cut-off and original perturbation equations coincide on the \(H^1\)-ball \(
B_{H^1}(0,R).\) 
 Moreover, we can see  \(F_R\) is globally Lipschitz from \(H^1\) into
\(H^{-1}\) with the Lipschitz constant $L_R$ determined latter in the proof of the main results.

\begin{theorem}[Inertial manifold around a periodic mild solution]\label{abstractResult}
\mbox{}\\
Let \(d>2\) and  $f\in C_b(\r, H^{1,1}(\Gamma(T\M\otimes T\M)))$  be  $T$-periodic.  Suppose that $\|f\|_{C_b}$ is 
  sufficiently small. Suppose that  \(\widehat u\) be the \(T\)-periodic mild solution of the Navier-Stokes equation \((2.1)\) obtained in Theorem 2.1. 
Assume that, for some \(N\in\mathbb N\), 
\begin{equation}\label{2.15}
{\sqrt{\lambda_{N+1}}-\sqrt{\lambda_N}}
>L_R.    
\end{equation}

\noindent
Then, the cut-off equation \eqref{2.16} 
possesses an \(N\)-dimensional inertial manifold.
\begin{remark}
    Note that, the above condition \eqref{2.15} is called spectral gap condition. Moreover,   since
 $$\sqrt{\lambda_{N+1}}-\sqrt{\lambda_N}
=
\frac{2N+d}
{\sqrt{(N+1)^2+N(d-1)}
+
\sqrt{N^2+(N-1)(d-1)}}$$ condition \eqref{2.15} is satisfied for certain choices of $N$ and $d$.
\end{remark}

\end{theorem}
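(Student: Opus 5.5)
We construct the manifold by the Lyapunov--Perron method in the energy scale $H^1\to H^{-1}$, which is the scale in which \eqref{2.15} is the natural spectral gap condition. Write $Q:=I-P$. Since $A$ is self-adjoint on $H$, the spectral representation gives the dichotomy estimates
\begin{align*}
\|e^{-tA}P\|_{H^{-1}\to H^1}&\le \lambda_N e^{-\lambda_N t},\quad t\le 0,\\
\|A^{1/2}e^{-tA}Q\,g\|_{L^2(0,\infty;H)}&\le\|g\|_{H^{-1}},
\end{align*}
together with the usual smoothing bounds in the $Q$-direction.

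The first step is to verify that $F_R:H^1\to H^{-1}$ is globally Lipschitz with an explicit constant $L_R$. For $\|w\|_{H^1},\|v\|_{H^1}\le 2R$ we estimate the bilinear term $\p\dive(w\otimes v)$ in $H^{-1}$ by $\|w\otimes v\|_{L^2}$. We then use the Sobolev embedding $H^1\hookrightarrow L^4$, or a suitable $L^q$ in dimension $d$, together with the regularity of $\widehat u$. That regularity follows from Theorem~\ref{mainnon} combined with the smallness of $f$ in $H^{1,1}$, and gives $\widehat u\in C_b(\r,H^1)$ with small norm. This yields
\[
\|F(w)-F(v)\|_{H^{-1}}\le C\bigl(\|w\|_{H^1}+\|v\|_{H^1}+\|\widehat u\|\bigr)\|w-v\|_{H^1}.
\]
The cutoff is handled as usual. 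If, say, $\|w\|_{H^1}\ge 2R\ge\|v\|_{H^1}$, compare with the point on the segment where the norm equals $2R$, and use $|\chi_R'|\le C_\chi/R$ to absorb the factor $\|F(v)\|\lesssim R^2$. This gives
\[
L_R=C\bigl(R+\|\widehat u\|_{C_b(\r,H^1)}\bigr)(1+C_\chi).
\]
The obstacle here is the embedding for $d>2$: $H^1\subset L^4$ only holds for $d\le 4$. For larger $d$ I would work with the scale $H^{s}\to H^{s-1}$ with $s$ large enough, adjusting the cutoff norm accordingly; the square-root gap condition is invariant under this shift.

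Next, fix $\xi\in PH$ and $s\in\r$. Consider the Lyapunov--Perron equation on the weighted space of $w$ with $\sup_{t\le s}e^{-\gamma(t-s)}\|w(t)\|_{H^1}<\infty$, where $\gamma:=\sqrt{\lambda_N\lambda_{N+1}}$ is chosen inside the gap:
\[
w(t)=e^{-(t-s)A}\xi-\int_t^s e^{-(t-\tau)A}PF_R(w(\tau))\,d\tau+\int_{-\infty}^t e^{-(t-\tau)A}QF_R(w(\tau))\,d\tau .
\]
Because $A$ is self-adjoint, the mode-by-mode estimate in the $H^1$ norm gives the sharp bound (Romanov / Zelik style)
\[
\mathrm{Lip}\le \frac{L_R}{\sqrt{\lambda_{N+1}}-\sqrt{\lambda_N}}\,\theta
\]
for the map $w\mapsto\text{RHS}$. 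This is obtained by Parseval in $L^2$ in time, not by the crude sup-estimate. Condition \eqref{2.15} then makes it a contraction.

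Hence there is a unique fixed point $w(\cdot;\xi,s)$, and we set $\Phi_s(\xi)=Qw(s;\xi,s)$. By autonomy of \eqref{2.16} in $w$, $\Phi_s$ is independent of $s$; note that the coefficient $\widehat u$ is $T$-periodic in time, so in general one gets a $T$-periodic family $\Phi_s$. Lipschitz continuity of $\Phi$ follows from the contraction estimate, and invariance follows from uniqueness of the fixed point, by the standard arguments.

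For exponential attraction, given a solution $w$, we seek $w^*$ on $\mathbb M$ with $w-w^*$ lying in the analogous forward-weighted space on $[s,\infty)$. We solve the dual Lyapunov--Perron fixed point problem for $z=w-w^*$, with $Qz(s)$ free and $Pz$ integrated from $+\infty$. The same gap estimate gives a contraction and decay at rate $\gamma$, which is \eqref{attract}.

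The main obstacle is the sharp $L_R/(\sqrt{\lambda_{N+1}}-\sqrt{\lambda_N})$ contraction bound. I would prove it by applying the Fourier transform in time to each eigenmode, bounding the multiplier
\[
\sup_{k,\omega}\frac{\lambda_k^{1/2}\cdot\lambda_k^{1/2}}{|i\omega+\lambda_k-\gamma|}
\]
with the exponential weight, and choosing $\gamma$ optimally between $\lambda_N$ and $\lambda_{N+1}$.
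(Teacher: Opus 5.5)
\textbf{The gap.} The proposal breaks at the step you flag as the main obstacle. In the scale $H^1\to H^{-1}$ the contraction bound $L_R/(\sqrt{\lambda_{N+1}}-\sqrt{\lambda_N})$ is false, and your own multiplier shows it.

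At $\omega=0$ the multiplier is $\lambda_k/|\lambda_k-\gamma|$. With $\gamma=\sqrt{\lambda_N\lambda_{N+1}}$, the modes $k=N$ and $k=N+1$ give $\sqrt{\lambda_N}/(\sqrt{\lambda_{N+1}}-\sqrt{\lambda_N})$ and $\sqrt{\lambda_{N+1}}/(\sqrt{\lambda_{N+1}}-\sqrt{\lambda_N})$. Even the optimal choice $\gamma=2\lambda_N\lambda_{N+1}/(\lambda_N+\lambda_{N+1})$ only gives $(\lambda_{N+1}+\lambda_N)/(\lambda_{N+1}-\lambda_N)$. So the fixed-point argument needs $(\lambda_{N+1}-\lambda_N)/(\lambda_{N+1}+\lambda_N)>L_R$. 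On $\mathbb{S}^d$ that quantity decays like $1/N$, whereas $\sqrt{\lambda_{N+1}}-\sqrt{\lambda_N}\to 1$, so it is far stronger than \eqref{2.15}.

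Your smoothing bound has the same half-power slip: $\|A^{1/2}e^{-tA}Qg\|_{L^2(0,\infty;H)}=2^{-1/2}\|Qg\|_{H}$, which is not controlled by $\|g\|_{H^{-1}}$.

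This is not a lossy estimate that a sharper argument could repair; the abstract implication is false. Take the linear map $F(w)=b\bigl(\langle w,e_{N+1}\rangle e_N-\langle w,e_N\rangle e_{N+1}\bigr)$.
\begin{itemize}
\item Its Lipschitz constant from $H^1$ to $H^{-1}$ is exactly $b/\sqrt{\lambda_N\lambda_{N+1}}$.
\item For $b>(\lambda_{N+1}-\lambda_N)/2$ it turns $\lambda_N,\lambda_{N+1}$ into a complex pair.
\item Orbits in $\mathrm{span}\{e_N,e_{N+1}\}$ then spiral into $0$ and cross $\{\langle w,e_N\rangle=0\}$ infinitely often, so no invariant Lipschitz graph over $PH$ exists.
\item Yet \eqref{2.15} holds for suitable such $b$ as soon as $\lambda_N^{-1/2}+\lambda_{N+1}^{-1/2}<2$.
\end{itemize}

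\textbf{What the square-root gap actually needs.} It is the natural condition for a nonlinearity that loses \emph{one} derivative, i.e.\ is Lipschitz $H^{s}\to H^{s-1}$. Then the multiplier is $\lambda_k^{1/2}/|i\omega+\lambda_k-\gamma|$, and $\gamma=\sqrt{\lambda_N\lambda_{N+1}}$ gives exactly $1/(\sqrt{\lambda_{N+1}}-\sqrt{\lambda_N})$ at both $k=N$ and $k=N+1$.

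You propose this scale only for $d>4$. But passing from $H^1\to H^{-1}$ to $H^s\to H^{s-1}$ is not a shift: the loss drops from a full power of $A$ to half a power. To make your argument work you must use it for every $d$:
\begin{itemize}
\item take $s>d/2$ and cut off with $\chi_R(\|w\|_{H^s})$;
\item let $L_R$ be the Lipschitz constant of $F_R:H^s\to H^{s-1}$;
\item this requires $\widehat u\in C_b(\mathbb{R},H^s)$, hence more regularity of $f$ than the statement assumes;
\item run the Lyapunov--Perron scheme for the $T$-periodic non-autonomous problem, yielding a periodic family $\Phi_s$, as you note.
\end{itemize}

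\textbf{Relation to the paper's proof.} The paper does not close this gap either. It proves the same $H^1\to H^{-1}$ Lipschitz bound and then quotes Zelik's Theorem 2.15 as if it were the one-derivative case, so the same mismatch is present there. Your two side remarks are correct, and the paper glosses over both:
\begin{itemize}
\item the product estimate via $H^1\hookrightarrow L^{2d/(d-2)}$ silently needs $d\le 4$;
\item the perturbation equation is non-autonomous because of $\widehat u$.
\end{itemize}
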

\section{Proof of the Main Results}
To prove the main results, we begin by analyzing the corresponding linear equation, which serves as a foundational step in understanding the underlying dynamics.
\begin{equation}\label{oelin}
\begin{cases}
\dfrac{du}{dt}=\D u+ \p\dive f(t) \hbox{ for }t\in \r \\
u\in L^p_\sigma(\Gamma(T\M)),
\end{cases}
\end{equation} 
where  $\D$ is the Ebin-Marsden Laplacian as previously. As usual, using Duhamel's formula,   
by a \textit{mild solution} to \eqref{oelin} we mean a function $u$ satisfying the integral  equation:
\begin{equation}\label{InEq}
u(t) = e^{(t-s)\D}x + \int_s^te^{(t-\tau)\D}\p\dive f(\tau)d\tau \hbox{  for all  } t\geqslant s, \ t,s\in \r.
\end{equation}

We are now stating the following lemma on existence and uniqueness of  periodic mild solutions to linear equations.
\begin{lemma}[Periodic mild solutions to linear  equations]\label{mainlin}\mbox{}\\
 Suppose that  $f\in C_b(\r, L^{p/2}(\Gamma(T\M\otimes T\M)))$ is 
   $T$-periodic. Then,  there exists a unique initial vector field   $x^*\in L_\sigma^p(\Gamma(T\M)) $ such that
    the mild solution $\hat{u}$  to  \eqref{oelin} with initial vector field $x^*$ is 
  $T$-periodic. Moreover, the $T$-periodic mild solution $\hat{u}$  satisfies 
\begin{equation}\label{bdd}
  \|\hat{u}\|_{C_b(\r, L_{\sigma}^{p}(\Gamma(T\M))}\le \tilde{M}\|f\|_{C_b(\r, L^{p/2}(\Gamma(T\M\otimes T\M)))}\hbox{ for  }\tilde{M}:=\tilde C\beta^{ \frac{d-p}{2p}}\mathbf{\Gamma}(1-\theta)+\frac{1}{\beta}
\end{equation}
with $\mathbf{\Gamma}$ standing for the Gamma function.
\end{lemma}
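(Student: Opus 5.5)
The plan is to build the periodic solution directly as the bounded mild solution on the whole line,
\begin{equation*}
\hat u(t):=\int_{-\infty}^{t} e^{(t-\tau)\D}\p\dive f(\tau)\,d\tau ,\qquad t\in\r ,
\end{equation*}
and then read off $x^*:=\hat u(0)$. (Equivalently one can use the Poincar\'e map $x\mapsto e^{T\D}x+\int_0^Te^{(T-\tau)\D}\p\dive f(\tau)d\tau$. Since $\|e^{T\D}\|\le Me^{-\alpha T}$, one may enlarge the period to $kT$ if needed to get a contraction. However, the whole-line formula yields the bound \eqref{bdd} most directly.)

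The first step is convergence of the integral together with estimate \eqref{bdd}. Apply Proposition \ref{lplq}(ii) with exponents $p/2$ and $p$, using that $\p$ commutes with $e^{t\D}$ and is bounded on $L^p$. This gives
\[
\|e^{(t-\tau)\D}\p\dive f(\tau)\|_{L^p}\le [h_d(t-\tau)]^{\frac1p+\frac1d}e^{-\beta(t-\tau)}\|f(\tau)\|_{L^{p/2}} .
\]
Split the $\tau$-integral at $t-\tau=1$. On $t-\tau\ge1$ we have $h_d=C$, so that part contributes at most a constant multiple of $\frac1\beta\|f\|_{C_b}$. On $t-\tau\le 1$ the kernel is $C(t-\tau)^{-\theta}e^{-\beta(t-\tau)}$ with $\theta:=\frac d2\bigl(\frac1p+\frac1d\bigr)=\frac{d+p}{2p}$. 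Extending to $(0,\infty)$ and substituting $s=\beta r$ gives
\[
\int_0^\infty r^{-\theta}e^{-\beta r}dr=\beta^{\theta-1}\mathbf{\Gamma}(1-\theta)=\beta^{\frac{d-p}{2p}}\mathbf{\Gamma}(1-\theta).
\]
This reproduces exactly the constant $\tilde M$ in \eqref{bdd}, with the constant from the first part absorbed into the $\frac1\beta$ term. Here one needs $\theta<1$, i.e.\ $p>d$. I expect this is the implicit standing assumption, and I will state it. This integrability near the diagonal is the main technical point.

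The remaining steps are continuity, the mild-solution identity, periodicity and the choice of $x^*$. Continuity of $\hat u$ in $t$ follows from strong continuity of the semigroup and dominated convergence. For $t\ge s$, splitting $\int_{-\infty}^t=\int_{-\infty}^s+\int_s^t$ and using the semigroup law gives
\[
\hat u(t)=e^{(t-s)\D}\hat u(s)+\int_s^te^{(t-\tau)\D}\p\dive f(\tau)d\tau,
\]
so $\hat u$ is a mild solution of \eqref{oelin} with initial value $\hat u(s)$. Periodicity follows from the substitution $\tau\mapsto\tau+T$ and the $T$-periodicity of $f$, which give $\hat u(t+T)=\hat u(t)$. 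Setting $x^*:=\hat u(0)$ gives existence. Divergence-freeness holds because everything lies in the range of $\p$, so $x^*\in L^p_\sigma$.

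For uniqueness, let $u_1,u_2$ be two $T$-periodic mild solutions. Their difference $v$ satisfies $v(t)=e^{(t-s)\D}v(s)$. Periodicity then gives $v(0)=v(nT)=e^{nT\D}v(0)$ for every $n$. By Proposition \ref{lplq}(i),
\[
\|v(0)\|_{L^p}\le Me^{-\alpha nT}\|v(0)\|_{L^p}\to0 ,
\]
hence $v\equiv0$. In particular the initial vector field $x^*$ producing a periodic solution is unique.
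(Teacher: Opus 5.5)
Your argument is correct, taking Proposition~\ref{lplq} as given exactly as the paper does, but it is organized differently from the paper's.

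The paper uses Serrin's method. It solves forward from zero data at time $s$, bounds $u(t)=\int_s^t e^{(t-\tau)\D}\p\dive f(\tau)\,d\tau$ uniformly via Proposition~\ref{lplq}(ii), and uses the decay (i) to show that $\{u(s+nT)\}_n$ is Cauchy. It then defines $x^*:=\lim_n u(s+nT)$ and verifies periodicity of the solution issuing from $x^*$ by a further limiting argument.

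By $T$-periodicity of $f$, one has $u(s+nT)=\int_{s-nT}^{s}e^{(s-\sigma)\D}\p\dive f(\sigma)\,d\sigma$. So the paper's sequence is precisely the orbit of $0$ under your Poincar\'e map; the a priori bound plus (i) replace any contraction requirement. Its limit $x^*$ is exactly your $\hat u(s)$: you simply write the limit down as the whole-line Duhamel integral.

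This buys you three things:
\begin{enumerate}[(a)]
\item Existence and the bound use only (ii), since the factor $e^{-\beta t}$ together with $p>d$ makes the kernel integrable on $(0,\infty)$. The decay (i) enters only in the uniqueness step.
\item Periodicity becomes a one-line change of variables.
\item Estimate \eqref{bdd} comes out with the constant $\tilde M$ itself. Along the paper's route, one needs the extra observation $\hat u(t)=\lim_n u(t+nT)$, left implicit there, to avoid picking up a factor $1+M$ from $\hat u(t)=e^{(t-s)\D}x^*+u(t)$.
\end{enumerate}
The paper's route, in turn, works only with forward solutions from finite initial times. It thus avoids the improper integral over $(-\infty,t]$ and the semigroup law applied to it. Your uniqueness step, $v(s)=e^{nT\D}v(s)\to 0$, is the paper's argument in slightly cleaner form.

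One remark affects the paper's proof in exactly the same way and is not a defect of yours relative to it. Every Killing field $K$ on $\M$ is divergence-free with $\nabla K+(\nabla K)^t=0$, hence $\D K=0$ and $e^{t\D}K=K$. Thus Proposition~\ref{lplq}(i) cannot hold with $\alpha>0$ on all of $L^p_\sigma(\Gamma(T\M))$. Moreover, adding $K$ to a $T$-periodic mild solution of \eqref{oelin} produces another one. So the uniqueness assertion of the lemma, proved the same way in both arguments, can only hold modulo this finite-dimensional kernel.
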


\begin{proof}
The proof of this lemma may be done using Serrin's method (see \cite{Ser59} for the original approach, and  
see \cite{NV2025} for implementation  of Serrin's  method in framework of diffusion equations on spheres).   
Take 
 $u(t)=\int_s^te^{(t- \tau)\D}\p\dive f(\tau)d\tau $ for all $t\ge s$, 
 $t, s \in\r $. 
 Firstly, setting $X:=L_\sigma^p(\Gamma(T\M))$ and  using inequality \eqref{lplq1} 
we have
$$\|u(t)\|_{X}=\|\int^{t}_se^{(t-\tau)\D}\p\dive f(\tau)d\tau\|_{X}\le C_1\int^{t}_s\|e^{(t-\tau)\D}\dive f(\tau)\|_{X}d\tau
$$
$$\le
 C_1\int_s^{t}
 [h_d(t-\tau)]^{\frac{1}{p}+\frac{1}{d}}e^{-\beta(t-\tau)} \left\| f(\tau) \right\|_{L^{p/2}}d\tau \hbox{ for all }t\ge s.$$
 Since $h_d(t)= C \max \left( \frac{1}{t^{d/2}},1 \right)$,  it follows  from above 
inequality that
\begin{align*}
   \| u(t)\|_{X}&\le 
   C_1\int_s^{t} 
   \left([C\tau]^{-\frac d2 (\frac{1}{p}+\frac{1}{d})}+1\right) e^{-\beta(t-\tau)}d\tau\|f\|_{C_b(\r, L^{p/2}(\Gamma(T\M\otimes T\M)))}\\
   &\le  \left(\tilde C\beta^{ \frac{d-p}{2p}}\mathbf{\Gamma}(1-\theta)+\frac{1}{\beta}\right)\|f\|_{C_b(\r, L^{p/2}(\Gamma(T\M\otimes T\M)))}\hbox{ for all }t\ge s
   \end{align*}
   where  $\mathit{\mathbf{\Gamma}}$ is  the Gamma function,
   $\theta:=\frac d2 (\frac{1}{p}+\frac{1}{d})<1$ since  $p>d$, and
   $\tilde C:=C_1C^{-\frac d2 (\frac{1}{p}+\frac{1}{d})}$.
 Hence,  $u\in C_b([s, \infty), X)$ and
 \begin{equation}\label{esu}
 \| u\|_{C_b([s, \infty), X)}\le \tilde{M}\|f\|_{C_b(\r, L^{p/2}(\Gamma(T\M\otimes T\M)))}\hbox{ for }\tilde M:= \tilde C\beta^{ \frac{d-p}{2p}}\mathbf{\Gamma}(1-\theta)+\frac{1}{\beta}.
\end{equation}  
 
\def\n{\mathbb{N}} 
Next, we will prove that the sequence $\{{u}(s+nT)\}_{n\in \n}$ is a Cauchy sequence in $X$.   
Indeed,  set $w(t)={u}(t+(m-n)T)$ for any fixed couple of natural  numbers  $m>n \in \n$.  The periodicity of $f$ implies that 
 $w$ can be rewritten as
\begin{equation}\label{eqw}
w(t)=e^{(t- s)\D}{u}(s+(m-n)T)+\int_s^te^{-(t- \tau)\D}\p \dive f(\tau)d\tau\hbox{ for all }t\ge s.
\end{equation}

\def\T{\Delta_H}
 Now, it follows from \eqref{dispersive} that 
$$
\|{u}(t)-w(t)\|=\|e^{(t-s)\D}(u(s)-w(s))\|\le Me^{-\alpha (t-s)}\|u(s)-w(s)\|\le Ce^{-\alpha (t-s)}, \; \; t>s$$
for $C:=\frac{2M }{\alpha}\|f\|_{C_b(\r, L^{p/2}(\Gamma(T\M\otimes T\M)))}$ which is independent of $m$ and $ n$.

Substituting $t:=s+nT$ in the above inequality we have 
$
\|{u}(s+nT)-{u}(s+mT)\|\le Ce^{-\alpha nT}$ 
for all $m>n\in \n$. Therefore,
 $\{{u}(s+nT)\}_{n\in \n}$ is a Cauchy sequence in  $X$ which is a Banach space.
Hence,   
 $\{{u}(s+nT)\}_{n\in \n}$ is convergent in $X$, and we put
$x^*:= \lim_{n\to \infty}{u}(s+nT)\in X.$

\def\T{\D}
\medskip
Plugging  $x^*$ as the initial vector field, we
 now show that the mild solution 
$$\hat{u}(t)=
e^{(t-s)\D}{x^*}+\int_s^te^{(t-\tau)\T}g(\tau)d\tau$$
is $T$-periodic. 
To do this, we set
${v(t)}:=\int_s^{t+nT}e^{(t+nT-\tau)\T}\p\dive g(\tau)d\tau $ 
for arbitrary fixed $n\in \n$ and all $t\ge s$, that is, $v(t)=u(t+nT)$ for 
\begin{equation}\label{defu}
u(t)=\int_s^{t}e^{(t-\tau)\T}\p\dive f(\tau)d\tau.
\end{equation}
 Once again,  it follows from periodicity of $g$  that $v$ satisfies
$${v(t)}=e^{(t-s)\T}u(s
+nT)+\int_s^{t}e^{(t-\tau)\T}\p\dive f(\tau)d\tau $$
 for $u$ defined as in \eqref{defu}.

Thus, 
$
\|\hat{u}(s+T)-v(s+T)\|_X=\norm{e^{T\T}({\hat{u}(s)}-v(s))}_X\le
 \norm{e^{T\T}}\|{\hat{u}(s)}-v(s)\|_X.$

This implies 
 $
\|\hat{u}(s+T)-{u}(s+(n+1)T)\|_X\le \norm{e^{T\T}}\|{x^*}-{u}(s+nT)\|_X.
$
Letting $n\to \infty$ and  using the limit   
$\lim\limits_{n\to \infty}{u}(s+nT)=x^*=\hat{u}(s)$ in $X$ (see above) we have
 $\hat{u}(s+T)=\hat{u}(s).$  
We next prove  that  $\hat{u}(t+T)=\hat{u}(t)$ for all $t\ge s$. 
In fact, set  
$\hat{w}:=\hat{u}(t+T)$ for all $t\ge s$.  The periodicity of $f$  yields 
 $$\hat{w}(t)=e^{(t-s)\T}\hat{u}(s+T)+\int_s^te^{(t-s)\T}\p\dive f(\tau)d\tau\hbox{ for all }t\ge s.$$
 Since $\hat{u}(s+T)=\hat{u}(s)$ we have 
 $$\hat{w}(t)=e^{-(t-s)\T}\hat{u}(s)+\int_s^te^{-(t-s)\T}\p\dive f(\tau)d\tau=\hat{u}(t)\hbox{ for all }t\ge s.$$
 That is, $\hat{u}(t)=\hat{w}(t)=\hat{u}(t+T)$ for all $t\ge s$, implying that  
  $\hat{u}(t)$ is  $T$-periodic. The inequality $\eqref{bdd}$ can be obtained from  \eqref{esu} in combination with the properties  $\|u^*\|_X\le \|u\|_{C_b}$ and 
$\|\hat{u}\|_{C_b}=\sup_{s\le t\le s+T}\|\hat{u}(t)\|_X$ because of the periodicity of $\hat{u}$.

\noindent 
The uniqueness of the initial vector field $x^*$ and therefore of the $T$-periodic solution  follows from \eqref{dispersive}. 
Namely, suppose that $u$ and $v$ are two $T$-periodic mild  solutions of Equation  \eqref{oelin} subject to initial vector fields    $x_1^*=u_s$ and $x_2^*=v_s$, respectively. Then 
$u(t)-v(t)=e^{(t-s)\T}(x_1^*-x_2^*)$. From  \eqref{dispersive} we arrive at  $$\|u(t)-v(t)\|_X=\|e^{(t-s)\T}(x_1^*-x_2^*)\|_X\le Me^{-\alpha (t-s)}\|x_1^*-x_2^*\|_X.$$

Therefore, $\lim_{t\to \infty}\|u(t)-v(t)\|_X=0$. In combination with periodicity and continuity of $u$ and $v$, we have that $u(t)=v(t)$ for all $t\in \r$.
\end{proof}
\def\b{\mathcal{B}}
\def\TT{\mathcal{T}}
Then, we give the proof of our first main result on existence of a unique $T$-periodic mild solution to Eq. \eqref{NSESd}.
\begin{proof}[{\bf Proof of Theorem \ref{mainnon}}]
We consider the following ball
$$ 
\b^{per}_{\rho} =\{ v \in C_b(\r, L_\sigma^p(\Gamma(T\M))) :  v \, \text{ is $T$-periodic and } \, \|v \|_{C_b} \le \rho  \}. 
$$
Clearly, for each $v\in \b^{per}_{\rho}$, $v\otimes v$ belongs to $C_b(\r, L^{p/2}(\Gamma(T\M\otimes T\M)))$  and is  $T$-periodic.  
On $\b^{per}_{\rho}$,  we determine the transformation $\TT$ as follows:
Consider the equation 
\begin{equation}\label{ns1}
u(t)= e^{(t-s)\D}x+\int_s^t e^{-(t-\tau)\D}\p \, \dive(-(v\otimes v)(\tau)+ f(\tau))d\tau\hbox{\ for all }t\ge s.
\end{equation}
Then, for $v\in  \B_\rho^{T}$ we put 
\begin{equation}\label{defphi}
\begin{split}
\TT(v)&=u\hbox{ where 
$u\in C_b(\r, X)$ is the unique $T$-periodic solution}\cr
& \hskip 5cm\hbox{  to Equation \eqref{ns1},}
\end{split}
\end{equation}
where $X:=L_\sigma^p(\Gamma(T\M))$, as previously. 
The existence and uniqueness of the above  periodic solution $u$ is proved in Lemma  \ref{mainlin}. Hence, $\TT$ is well-defined.

Furthermore,  
using the fact that $v\in \b^{per}_\rho$, by \eqref{bdd} and H\"older inequality we derive
$$
\| u \|_{C_b(\r, X}) \le  \tilde{M}(\| v\otimes v \|_{C_b(\r, L^{p/2}(\Gamma(T\M\otimes T\M)))} + \|f\|_{C_b(\r, L^{p/2}(\Gamma(T\M\otimes T\M)))})\le$$
$$\le \tilde{M}(\rho^2+\|f\|_{C_b(\r, L^{p/2}(\Gamma(T\M\otimes T\M)))}).
$$
Hence, $\TT$ maps from $\b^{per}_{\rho}$ into itself provided $\| f \|_{C_b(\r, L^{p/2}(\Gamma(T\M\otimes T\M)))}$ and  $\rho$ are sufficiently small.

For $v_1, v_2 \in \b^{per}_{\rho}$ set  $\TT v_1:=z_1$,  $\TT v_2:=z_2$.  From H\"older's inequality it follows that   
 \begin{align}
\| z_1 - z_2 \|_{C_b(\r, X)} &\le \tilde{M}\| v_1\otimes v_1 - v_2\otimes v_2 \|_{C_b(\r, L^{p/2}(\Gamma(T\M\otimes T\M)))}  \notag \\
&=\tilde{M}\| v_1\otimes v_1 - v_1\otimes v_2+v_1\otimes v_2- v_2\otimes v_2 \|_{C_b(\r, L^{p/2}(\Gamma(T\M\otimes T\M)))} \cr
&\le  2 \tilde{M}  \rho  \| v_1 - v_2 \|_{C_b(\r, X)}. \notag
\end{align} 
Thus, if  $\rho$ and $\| f \|_{C_b(\r, L^{p/2}(\Gamma(T\M\otimes T\M)))}$  
are sufficiently small, then $\TT$ is a selfmap in $\b^{per}_{\rho}$, and  a contraction. Therefore, $\TT$ has a unique fixed point, and this is the unique  periodic mild solution to equation \eqref{NSESd}
in a small closed ball of $C_b(\r, L_\sigma^{p}(\Gamma(T\M)))$.\end{proof}

We now give the  proof of our main result stated in Theorem \ref{abstractResult}.
\begin{proof}[{\bf  Proof of Theorem \ref{abstractResult} (Inertial Manifolds)}]
We remark first that, since $\M$ is compact, the Navier-Stokes equations on $S^d$ is dissipative as proved below:
Indeed, for the Navier-Stokes equation
\begin{equation}
    u_t+\nabla_u u+\operatorname{grad}\pi
    =\Delta u+\operatorname{div}f,
    \qquad
    \operatorname{div}u=0,
\end{equation}
taking the \(L^2(\M)\)-inner product with \(u\) gives
\begin{equation}\label{3.171}
    \frac{1}{2}\frac{d}{dt}\|u(t)\|_{L^2}^2
    +\|\nabla u(t)\|_{L^2}^2
    =-(f(t),\nabla u(t)).
\end{equation}

Indeed,
\begin{equation}
\begin{aligned}
    (\nabla_u u,u)
    &=\frac{1}{2}\int_{\M}u\cdot\nabla |u|^2\,dS =-\frac{1}{2}\int_{\M}
(\operatorname{div}u)|u|^2\,dS
    =0.
\end{aligned}
\end{equation}

Since \(S^d\) is compact and the  Ebin-Marsden
Laplacian has strictly positive first eigenvalue,
\[
    \|\nabla u\|_{L^2}^2
    \geq \lambda_1\|u\|_{L^2}^2\hbox{ for }\lambda_1>0.
\]
Hence
\begin{equation}    \label{3.18}
    \frac{d}{dt}\|u(t)\|_{L^2}^2
+\lambda_1\|u(t)\|_{L^2}^2
    \leq C\|f(t)\|_{L^2}^2.
\end{equation}
Thus the equation is dissipative in \(L^2\), that is, bounded sets are
eventually absorbed into a bounded \(L^2\)-ball.

For \(f=0\), in particular,
\begin{equation}
    \|u(t)\|_{L^2}
    \leq e^{-\lambda_1t/2}\|u(0)\|_{L^2}.
    \tag{3.19}
\end{equation}
This is valid for every \(d\geq2\).

Then, since $f\in C_b(\r, H^{1,1}(\Gamma(T\M\otimes T\M)))$, in a similar way as in Theorem \ref{mainnon}, we have that the periodic solution $\hat{u}$ belongs to $C_b(\r, H^{1}(\Gamma(T\M)))$
We next  consider equation for the perturbation around the periodic solution. As previously, take any solution
 $u$ of Equation \eqref{NSESd}  and put 
$
w(t):=u(t)-\widehat u(t).
$
Since both \(u\) and \(\widehat u\) satisfy the Navier-Stokes equation \eqref{NSESd},  subtraction gives
$$
\partial_t w
+
P\operatorname{div}
\left[
(w+\widehat u)\otimes(w+\widehat u)
-\widehat u\otimes\widehat u
\right]
=
\D w.
$$
Hence, with
$
A:=-\D,
$ 
we obtain 
\begin{equation}\label{3.9}
\partial_t w+Aw=F(w)
\end{equation}
where
\begin{equation}\label{3.10}
F(w)
=
-P\operatorname{div}
\left[
w\otimes w+w\otimes\widehat u
+\widehat u\otimes w
\right].    
\end{equation}
Clearly,
 $
F(0)=0.
 $ 
We next prove the local Lipschitz continuity of $F$. To this purpose,
 for \(v,z\in H^1(\Gamma(T\M))\), Sobolev embedding yields
 $
H^1(\Gamma(TS^d))
\hookrightarrow
L^{\frac{2d}{d-2}}(\Gamma(TS^d)).
$ 
Therefore,
\begin{equation}\label{3.11
}
    \begin{aligned}
\|P\operatorname{div}(v\otimes z)\|_{H^{-1}}
&\leq C\|v\otimes z\|_{L^2}\leq
C\|v\|_{L^{\frac{2d}{d-2}}}
\|z\|_{L^{\frac{2d}{d-2}}}\leq C\|v\|_{H^1}\|z\|_{H^1}.
\end{aligned}
\end{equation}

Consequently, for \(w_1,w_2\in H^1\),

\begin{equation}\label{3.12}
\begin{aligned}
\|F(w_1)-F(w_2)\|_{H^{-1}}
\leq
C\bigl(
\|w_1\|_{H^1}
+\|w_2\|_{H^1}
+\|\widehat u\|_{H^1}
\bigr)
\|w_1-w_2\|_{H^1}.
\end{aligned}
\end{equation}

Thus \(F\) is locally Lipschitz from \(H^1\) into
\(H^{-1}\).


We then consider the cut-off equations \eqref{CutEq} and  
prove that the cut-off nonlinear operator $F_R$ defined as in \eqref{CutEq} is  globally  Lipschitz. To do this,  
fix \(R>0\). Let
$\chi_R$ by the cut-off function as in \eqref{cut}. Recall that  
$
F_R(w)
=
\chi_R(\|w\|_{H^1})F(w).
$ 
By construction,
$
F_R(w)=F(w)
\text{ if }\|w\|_{H^1}\leq R.
$

Let \(w_1,w_2\in H^1\). We write
\begin{equation}\label{3.16}
    \begin{aligned}
F_R(w_1)-F_R(w_2)
={}&
\chi_R(\|w_1\|_{H^1})
\bigl(F(w_1)-F(w_2)\bigr)
+
\bigl[
\chi_R(\|w_1\|_{H^1})
-\chi_R(\|w_2\|_{H^1})
\bigr]F(w_2).
\end{aligned}
\end{equation}
If both \(\|w_1\|_{H^1}\) and \(\|w_2\|_{H^1}\) are at most \(2R\), then (3.12) gives
\begin{equation}\label{3.17}
    \|F(w_1)-F(w_2)\|_{H^{-1}}
\leq
C(4R+K)\|w_1-w_2\|_{H^1},
\hbox{ where }
K:=\|\widehat u\|_{C_b(\mathbb R,H^1)}.
\end{equation}
Moreover,
$$
\begin{aligned}
&
\left|
\chi_R(\|w_1\|_{H^1})
-\chi_R(\|w_2\|_{H^1})
\right|
\leq
\frac{C_\chi}{R}
\|w_1-w_2\|_{H^1}.
\end{aligned}
$$

Since the second term in (\ref{3.16}) is non-zero only when
\(\|w_2\|_{H^1}\leq2R\), we also have
\begin{equation}\label{3.19}
\|F(w_2)\|_{H^{-1}}
\leq C(2R+K)^2.
\end{equation}
Combining (\ref{3.16})--(\ref{3.19}), we obtain
\begin{equation}
 \label{3.20}   
{
\|F_R(w_1)-F_R(w_2)\|_{H^{-1}}
\leq
L_R\|w_1-w_2\|_{H^1},
}
\end{equation}
where
$
L_R
=
C(4R+K)
+
\frac{C_\chi}{R}C(2R+K)^2.
$ 
Thus \(F_R\) is globally Lipschitz from \(H^1\) into
\(H^{-1}\).

We now can apply the  Zelik's theorem \cite[Theorem 2.15]{Ze14} to
 $$
\partial_t w+Aw=F_R(w).
$$
 Here the phase space is
$
H=L^2_\sigma(\Gamma(TS^d)),
$ 
and
 $
H^1=D(A^{1/2}),
\qquad
H^{-1}=D(A^{-1/2}).
$  
Thus the nonlinearity \(F_R\) is globally Lipschitz from \(H^1\) into
\(H^{-1}\), corresponding to the case \(\beta=-1\) in Zelik's theorem.

The spectral-gap condition (2.65) of \cite[Theorem 2.15]{Ze14} therefore becomes
$$
\sqrt{\lambda_{N+1}}
-
\sqrt{\lambda_N}
>L_R.
$$
For the Ebin--Marsden Laplacian,
$$
\lambda_k=k^2+(k-1)(d-1),
\qquad k=1,2,\ldots.
$$
Hence
\begin{equation} \label{3.26}
\begin{aligned}
\sqrt{\lambda_{N+1}}-\sqrt{\lambda_N}
=
\frac{2N+d}
{\sqrt{(N+1)^2+N(d-1)}
+
\sqrt{N^2+(N-1)(d-1)}}.
\end{aligned}
\end{equation}
Whenever \(N\) and \(d\) are chosen so that
$$
\sqrt{\lambda_{N+1}}-\sqrt{\lambda_N}>L_R,
$$

Zelik's Theorem \cite[Theorem 2.15]{Ze14} yields an \(N\)-dimensional inertial manifold
$$
\mathcal M_R
=
\{w_++\Phi_R(w_+):w_+\in P_NH\}
$$
for the cut-off equation (3.22), where
$$
\Phi_R:P_NH\to Q_NH
$$
is Lipschitz continuous and the exponential tracking property holds.

Lastly, we would like to explain 
the relation with the original Navier-Stokes equation. To this end, 
by \eqref{3.151}, the cut-off equation and the original perturbation equation coincide on
$$
B_R:=\{w\in H^1:\|w\|_{H^1}\leq R\}.
$$
Therefore, every trajectory of the cut-off equation which remains in \(B_R\) also solves the original perturbation equation.

In particular, if $
\mathcal U\subset B_R
$ 
is a positively invariant neighborhood of \(w=0\), then the part 
$
\mathcal M_R\cap\mathcal U
$ 
is invariant for the original perturbation equation and provides a finite-dimensional reduction of the Navier-Stokes dynamics in \(\mathcal U\).
 This completes the proof.
\end{proof}



\end{document}